\newtheorem{theorem}{Theorem}[section]
\newtheorem{proposition}[theorem]{Proposition}
\newtheorem{lemma}[theorem]{Lemma}
\theoremstyle{definition}
\newtheorem{example}[theorem]{Example}
\newtheorem{remark}[theorem]{Remark}
\newtheorem{problem}[theorem]{Problem}
\numberwithin{equation}{section}
 \newcommand{\bZ}{\mathbb Z}
 \newcommand{\bQ}{\mathbb Q}
\begin{document}
\sloppy

\title[Strongly chiral 
 $\bQ$HS with hyperbolic fundamental groups]
 {Strongly chiral rational homology spheres with hyperbolic fundamental groups}

\author{Christoforos Neofytidis}
\address{Department of Mathematics and Statistics, University of Cyprus, Nicosia 1678, Cyprus}
\email{neofytidis.christoforos@ucy.ac.cy}

\subjclass[2010]{55M25, 57N65, 57M05, 20F34}
\keywords{}

\date{\today}

\begin{abstract}
For each $m\geq0$ and any prime $p\equiv3\ \mathrm{(mod \ 4)}$, we construct strongly chiral rational homology $(4m+3)$-spheres, 
 which have real hyperbolic fundamental groups and only  
non-zero integral intermediate homology groups isomorphic to $\bZ_{2p}$ in degrees $1,2m+1$ and $4m+1$. This gives group theoretic analogues in high dimensions of the existence of strongly chiral hyperbolic rational homology $3$-spheres, as well as 
of the existence of strongly chiral hyperbolic manifolds of any dimension that are not rational homology spheres, which was shown by Weinberger. One of our tools will be $r$-spins. We thus investigate the relationship between the sets of degrees of self-maps of a given manifold and its $r$-spins, and give classes of manifolds for which the sets are equal. 
\end{abstract}

\maketitle
\vspace{-.5cm}

\section{Introduction}

Let $M$ be a closed oriented $n$-manifold with fundamental class $[M]\in H_n(M;\bZ)$. A continuous map $f\colon M\to M$ is said to be of degree $d\in\bZ$, denoted by $\deg(f)=d$, if $H_n(f)([M])=d\cdot[M]$, and we define the {\em set of degrees of self-maps} of $M$ by
\[
D(M) := \{d\in\bZ \ | \  f\colon M\to M, \ \deg(f)=d\}.
\]

The study of $D(M)$ is a long-standing theme in topology. Clearly, we always have that $0\in D(M)$ (e.g. via a constant map) and $1\in D(M)$ (e.g. via the identity map), that is, $\{0,1\}\subseteq D(M)$ for any $M$. Naturally, two problems of particular interest are the following for a given manifold $M$: 

\begin{problem}\label{p1}
Is there $-1\in D(M)$?
\end{problem}

\begin{problem}\label{p2}
Is there $d\in D(M)$ with $|d|>1$?
\end{problem}

Problem \ref{p1} is part of the classic question for the existence or not of orientation-reversing maps, and has been explicitly formulated in this form by Sasao~\cite[Problem 2]{Sasao} within a set of problems aiming to understand how $D(M)$ characterises $M$; see~\cite[page 1]{Sasao}. Concerning Problem \ref{p2}, we note that the existence of a map of degree $d$ with $|d|>1$ implies immediately that $D(M)$ is infinite. Strong obstructions to this phenomenon have been developed through the concept of volume-type semi-norms in homology, most notably the simplicial volume introduced by Gromov~\cite{Gr82}. Hyperbolic manifolds consist a prominent class of spaces with positive simplicial volume~\cite{Gr82}, thus satisfying $D(M)\subseteq\{-1,0,1\}$. The later conclusion is more generally true for closed aspherical manifolds with non-elementary hyperbolic fundamental groups, however, it should be noted that there are classes of aspherical manifolds with zero or unknown simplicial volume but still satisfying $D(M)\subseteq\{-1,0,1\}$; see~\cite{NeoHopf,Neoendo} and their references. Hence, while Problem \ref{p2} is completely settled for hyperbolic manifolds, Problem \ref{p1} seems to be a more subtle one. 

In dimension two, every closed hyperbolic surface has self-mapping degree set equal to $\{-1,0,1\}$. However, for all $n\geq3$, S. Weinberger observed 
that a result of Belolipetsky and Lubotzky \cite[Theorem 1.1]{BL} implies that there is an abundance of hyperbolic $n$-manifolds with no self-maps of degree $-1$. We call such manifolds {\em strongly chiral} from corresponding concepts of mathematical biology and chemistry, most notably the notions of chiral knots and chiral molecules (the term of {\em chiral} alone in mathematics is reserved for the non-existence of orientation-reversing homeomorphisms).

\begin{theorem}[Weinberger]\cite[Section 3]{Mue}\label{t:W}
For each $n\geq3$, there are infinitely many strongly chiral closed hyperbolic $n$-manifolds.
\end{theorem}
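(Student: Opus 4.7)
The plan is to combine the arithmetic input of Belolipetsky--Lubotzky \cite[Theorem 1.1]{BL} with Mostow rigidity. Their theorem asserts that for every $n\geq 2$ and every prescribed finite group $G$, there exist infinitely many pairwise non-isometric closed arithmetic hyperbolic $n$-manifolds whose full isometry group is isomorphic to $G$. Specializing to the trivial group $G=\{1\}$, I extract for each $n\geq 3$ an infinite family $\{M_i\}_{i\in\bN}$ of pairwise non-homeomorphic closed hyperbolic $n$-manifolds with no non-trivial isometries whatsoever, and in particular no orientation-reversing isometries.

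Next I verify that each such $M_i$ is strongly chiral. Suppose, for contradiction, that $f\colon M_i\to M_i$ satisfies $\deg(f)=-1$. Since $M_i$ is aspherical, a standard covering-space argument shows that $[\pi_1(M_i):f_*\pi_1(M_i)]$ divides $|\deg(f)|=1$, so $f_*$ is surjective; because $\pi_1(M_i)$ is a torsion-free word-hyperbolic group, it is Hopfian by Sela's theorem, hence $f_*$ is an isomorphism, and asphericity upgrades $f$ to a self-homotopy equivalence of $M_i$. Mostow rigidity, applicable because $n\geq 3$, then yields an isometry $\varphi\colon M_i\to M_i$ homotopic to $f$; the equality $\deg(\varphi)=\deg(f)=-1$ forces $\varphi$ to reverse orientation, contradicting the choice of $M_i$. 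Therefore $-1\notin D(M_i)$, and combined with the observation in the introduction that $D(M_i)\subseteq\{-1,0,1\}$ for closed hyperbolic manifolds, each $M_i$ is strongly chiral.

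The main difficulty is really packaged into \cite{BL}: producing hyperbolic $n$-manifolds with prescribed trivial isometry group requires a delicate arithmetic construction using suitable congruence covers of uniform arithmetic lattices. The remaining ingredients---the Hopf property for hyperbolic groups due to Sela and Mostow rigidity---are classical and applied in an off-the-shelf manner. A minor subtlety to check is that the Belolipetsky--Lubotzky statement refers to the \emph{full} isometry group (including orientation-reversing symmetries); if one prefers the variant where only the orientation-preserving isometries are prescribed, one must additionally ensure that no orientation-reversing symmetry sneaks in, but taking $G=\{1\}$ as above avoids this issue outright.
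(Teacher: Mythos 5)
Your argument is correct and is exactly the approach the paper attributes to Weinberger (and sketches in Remark~\ref{r:aspherical}): feed Belolipetsky--Lubotzky's prescribed-isometry-group theorem with the trivial group, then use nonzero degree $\Rightarrow$ $\pi_1$-surjectivity (index divides degree), Hopficity, asphericity, and Mostow rigidity to promote any hypothetical degree $-1$ self-map to an orientation-reversing isometry, which cannot exist. The only cosmetic difference is that the paper first narrows $D(M)$ to $\{-1,0,1\}$ via simplicial volume before invoking rigidity, while you go directly after a putative degree $-1$ map; both routes are equivalent here.
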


Hyperbolic manifolds have usually rich, not sufficiently understood homology. Thus it would be natural to seek to refine Theorem \ref{t:W} into a more specific form by controlling, and if possible eliminating, the (rational) homology of the hyperbolic manifolds in all intermediate degrees. However, it is difficult to construct hyperbolic manifolds that are homology spheres, integrally or rationally. Indeed, this is impossible in dimensions $4m+2$, since an even dimensional rational homology sphere has Euler characteristic $\chi=2$, but the Euler characteristic of hyperbolic manifolds of even dimensions $n$ satisfies $(-1)^{n/2}\chi>0$; see~\cite[Theorem 1.62]{Lu}. In fact, the only known examples of hyperbolic rational homology spheres are in dimension three, where one can use Dehn surgery to construct infinitely many closed hyperbolic 
(rational or integral) homology 3-spheres. 
(Furthermore, Ratcliffe-Tschantz \cite{RT} constructed infinitely many aspherical homology $4$-spheres, which are however not hyperbolic.) Thus, currently it seems  out of reach an attempt to refine further Theorem \ref{t:W} by constructing (high dimensional) strongly chiral hyperbolic rational homology spheres, necessarily of odd dimension or of dimension divisible by four. Our goal in this paper is to show that we can 
at least completely control the fundamental group and the homology of the manifold:

\begin{theorem}\label{t:main}
For each $m\geq0$ and any prime $p$ so that $p\equiv 3$ $(\mathrm{mod} \ 4)$, there are infinitely many strongly chiral rational homology $(4m+3)$-spheres $M_m$ with 
real hyperbolic fundamental group and homology
\begin{equation}\label{eq.homology}
H_i(M_m;\bZ) =\left\{\begin{array}{ll}
        \bZ, & \text{for } i=0,4m+3\\
        \bZ_{2p}, & \text{for } i=1,2m+1,4m+1\\
        0, & \text{otherwise}.
        \end{array}\right.
\end{equation}  
\end{theorem}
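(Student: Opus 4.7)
The plan is to handle the base case $m=0$ by Dehn surgery on a hyperbolic knot, and then to lift to higher dimensions via an $r$-spin construction, whose interaction with degree sets is precisely the subject of the paper. For $m=0$, I would take a hyperbolic knot $K\subset S^{3}$ and perform Dehn surgery on $K$ with slope $2p/q$ for a suitable integer $q$ coprime to $2p$; by Thurston's hyperbolic Dehn surgery theorem, all but finitely many slopes produce a closed hyperbolic 3-manifold $M_{0}$, and the surgery formula gives $H_{1}(M_{0};\bZ)\cong\bZ_{2p}$. To prove strong chirality I would use the $\bQ/\bZ$-valued linking form $\ell$ on $H_{1}(M_{0})\cong\bZ_{2p}$; writing $\ell(g,g)=s/(2p)$ for a generator $g$ and an integer $s$ coprime to $2p$, orientation reversal sends $\ell$ to $-\ell$, and a change of generator multiplies $\ell$ by a square, so $\ell\cong-\ell$ precisely when $-1$ is a square modulo $2p$. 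Since $p\equiv3\pmod{4}$ forces $-1$ to be a non-residue mod $p$ and hence mod $2p$, no orientation-reversing self-homeomorphism of $M_{0}$ can exist. Mostow rigidity together with Gromov's simplicial-volume bound gives $D(M_{0})\subseteq\{-1,0,1\}$, with $-1\in D(M_{0})$ iff an orientation-reversing self-homeomorphism exists; hence $D(M_{0})\subseteq\{0,1\}$. Varying $K$ or the slope then produces infinitely many such $M_{0}$.

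For $m\geq1$, I would define $M_{m}$ as a suitable $r$-spin of $M_{0}$, modelled as $\partial\bigl((M_{0}\setminus B^{3})\times D^{r+1}\bigr)$ with $r$ chosen so that $\dim M_{m}=4m+3$. A Van Kampen argument shows that the spin operation preserves the fundamental group, hence $\pi_{1}(M_{m})\cong\pi_{1}(M_{0})$ remains real hyperbolic. The homology would be computed via Lefschetz duality applied to the pair $\bigl((M_{0}\setminus B^{3})\times D^{r+1},\,\Sigma^{r}M_{0}\bigr)$ together with the long exact sequence of the pair. The pure spin yields $H_{1}(M_{m})\cong H_{4m+1}(M_{m})\cong\bZ_{2p}$ and no other intermediate homology; the missing middle class $H_{2m+1}\cong\bZ_{2p}$ would have to be threaded in, most likely through a twist of the spin construction or an additional middle-dimensional handle attachment compatible with the spin. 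The Poincaré-dual placement of the three nontrivial intermediate degrees (namely, $1$ and $4m+1$ are dual and $2m+1$ is self-dual in dimension $4m+3$) constrains what such a modification can look like.

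Strong chirality of $M_{m}$ is then transferred from $M_{0}$ using the paper's announced comparison of degree sets between a manifold and its $r$-spin: within the relevant class one has $D(M_{m})\subseteq D(M_{0})\subseteq\{0,1\}$. The main obstacle, in my view, is twofold: first, pinpointing the exact spin-plus-modification recipe which produces precisely the homology in \eqref{eq.homology} with no extraneous summands; and second, proving the degree-set inclusion $D(\Sigma^{r}M)\subseteq D(M)$ in adequate generality, since a self-map of the spun manifold need not a priori descend to $M_{0}$. The latter should follow from a rigidity argument using the hyperbolicity of $\pi_{1}(M_{0})$ together with the way the spin decomposes at the level of fundamental groups.
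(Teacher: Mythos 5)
Your $m=0$ base case (hyperbolic $(2p,q)$-Dehn filling, then linking form with $-1$ a non-residue mod $2p$) is exactly the paper's Theorem 3.1, and is correct. You also correctly diagnose that a pure $4m$-spin of $M_0$ yields $\bZ_{2p}$ in degrees $1$ and $4m+1$ but nothing in degree $2m+1$.

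From there, however, your proposal has a genuine gap on the central point of how strong chirality is actually obtained for $m\geq1$. You propose to ``transfer'' strong chirality from $M_0$ to its spin via an inclusion $D(M_m)\subseteq D(M_0)$, appealing to Mostow rigidity and hyperbolicity of $\pi_1$. This does not work: $\sigma_r(M_0)$ is not aspherical (it has nonvanishing higher homotopy from the glued-in $S^2\times D^{r+1}$), so Mostow rigidity is unavailable, and a self-map of the spun manifold has no reason to induce a useful self-map of $M_0$. In fact the paper exhibits examples going the other way --- $D(\Sigma_g)=\{-1,0,1\}$ while $D(\sigma_r(\Sigma_g))=\bZ$, and $D(\mathbb{CP}^n)=\{k^n\}$ while $D(\sigma_r(\mathbb{CP}^n))=\bZ$ --- and poses the relationship between $D(M)$ and $D(\sigma_r(M))$ as an open problem (Problems~\ref{p3} and~\ref{p4}). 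So the inclusion you need is not merely unproved; it is false in general and in no way follows from hyperbolicity of $\pi_1$.

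The paper's actual mechanism is different, and it simultaneously solves both of your difficulties. The missing middle torsion is supplied not by a ``twist'' of the spin or a handle attachment, but by taking the connected sum $M_m = E_m \# \sigma_{4m}(N_p)$, where $E_m$ is the unit sphere bundle of $f^*TS^{2m+2}$ for a degree-$p$ self-map $f$ of $S^{2m+2}$: this is a $2m$-connected rational homology $(4m+3)$-sphere whose only torsion is $\bZ_{2p}$ in the middle cohomological degree $2m+2$, so the connected sum has exactly the required homology~\eqref{eq.homology} and $\pi_1(M_m)\cong\pi_1(N_p)$. Strong chirality is then established \emph{directly} on $M_m$, not transferred: since $\dim M_m=4m+3=2k+1$ with $k=2m+1$ odd and $\mathrm{Tor}\,H^{k+1}(M_m)=\bZ_{2p}$ (contributed entirely by $E_m$), Proposition~\ref{p:kodd} applies verbatim. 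The hyperbolicity of $\pi_1$ plays no role in the chirality argument for $m\geq1$; it is only there to guarantee the group-theoretic conclusion. Your proposal should be revised along these lines: replace the conjectural degree-set transfer with a direct linking-form argument on $M_m$, and replace the unspecified modification of the spin with the connected sum against $E_m$.
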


Here, real hyperbolic fundamental group means the fundamental group of a closed real hyperbolic manifold (which will be a fixed $3$-manifold, as we shall see). 
Note that the homology of $M_m$ clearly generalises that of hyperbolic rational homology $3$-spheres obtained by $(2p,q)$-Dehn surgery on hyperbolic link complements in $S^3$.

The proof of Theorem \ref{t:main} will use as one of the building factors certain strongly chiral simply connected rational homology spheres, also used by M\"ullner~\cite{Mue}. In fact, based on the techniques and results of~\cite{Mue} and~\cite{Ne18}, one can construct many strongly chiral product manifolds with torsion-free hyperbolic fundamental groups (see Theorem \ref{t:products}). However, the disadvantage of taking products is that one looses the property of the manifolds being rational homology spheres. Therefore one needs to come up with a construction that does not produce new intermediate homology. The idea here is to combine the aforementioned simply connected manifolds with  spins of hyperbolic $3$-manifolds that are rational homology spheres, which will keep the integral homology controlled, namely $2p$-torsion, in three degrees: $1$, $2m+1$ and $4m+1$. (The definition of an $r$-spin will be given in Section \ref{s:spin}.) As indicated above, this construction will moreover have the strong feature that the fundamental groups  will remain fixed in all dimensions, giving us in particular a group theoretic analogue of Theorem \ref{t:W}. In fact, if one is willing to fix the fundamental group only in dimensions $\geq7$ by considering spins of a hyperbolic integral homology $3$-sphere rather than  the rational homology $3$-sphere $M_0$ of Theorem \ref{t:main}, then one could obtain a further refinement of Theorem \ref{t:main}, where the only non-zero intermediate homology $\bZ_{2p}$ will occur in degree $2m+1$; see Theorem \ref{t:main2}. 

The instrumental role of spins in the proof of Theorem \ref{t:main} motivates further research on understanding the relationship between $D(M)$ and $D(\sigma_r(M))$, where $\sigma_r(M)$ denotes the $r$-spin of $M$. As we shall see, while in general one expects that $D(\sigma_r(M))$ is larger than $D(M)$, it would be natural to understand when the two sets actually coincide; see Problem \ref{p4}. In this direction, we indicate the following:

\begin{theorem}\label{t:classes}
If $M$ is a sphere, a product of spheres, or a connected sum of products of spheres, then $D(M)=D(\sigma_r(M))$ for any $r>0$.
\end{theorem}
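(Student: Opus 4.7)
My strategy is to combine the general forward inclusion $D(M) \subseteq D(\sigma_r(M))$ with the observation that $D(M) = \bZ$ in each of the three listed classes; since $D(\sigma_r(M)) \subseteq \bZ$ is automatic, the chain $\bZ = D(M) \subseteq D(\sigma_r(M)) \subseteq \bZ$ forces $D(\sigma_r(M)) = D(M) = \bZ$.

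For the forward inclusion, given $f\colon M\to M$ of degree $d$, I would first homotope $f$ by a standard preimage-consolidation argument (sweeping the preimages of a regular value inside the chosen disk into a single disk) so that $f^{-1}(D^n) = D^n$ and the restriction $f|_{D^n}\colon (D^n,S^{n-1})\to(D^n,S^{n-1})$ has relative degree $d$. Writing $M_0 = M\setminus\mathrm{int}(D^n)$ and using the identification
\[
\sigma_r(M) = (M_0\times S^r)\cup_{S^{n-1}\times S^r}(S^{n-1}\times D^{r+1}),
\]
I would define $\sigma_r(f)$ piecewise by $f\times\mathrm{id}_{S^r}$ on $M_0\times S^r$ and by $(f|_{S^{n-1}})\times\mathrm{id}_{D^{r+1}}$ on $S^{n-1}\times D^{r+1}$; these agree on the common boundary $S^{n-1}\times S^r$, and a preimage count at a regular value in $\mathrm{int}(M_0)\times S^r$ gives $\deg\sigma_r(f) = d$.

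The fact $D(M) = \bZ$ is then verified in each of the three classes. For $M = S^n$ this is classical. For a product $M = \prod_i S^{n_i}$ and any $d\in\bZ$, the self-map $f_d\times\mathrm{id}\times\cdots\times\mathrm{id}$ with $\deg f_d = d$ realizes $d$. For a connected sum $M = N_1\#\cdots\# N_\ell$ of products of spheres, every integer degree is realized via a standard connected-sum-of-self-maps construction: for each summand $N_j$ one produces a degree-$d$ self-map $f_j\colon N_j\to N_j$ built from a degree-$d$ self-map of the first sphere factor (which can be arranged to be the identity on a hemisphere-disk) crossed with identities on the remaining factors, and these are then glued along the connected-sum spheres $\partial D_j$.

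The main technical point is the preimage-consolidation homotopy in the forward inclusion---arranging $f^{-1}(D^n) = D^n$ while preserving the homotopy class---together with the analogous compatibility in the connected-sum case, where one must ensure that the degree-$d$ self-maps of the summands agree on the gluing spheres $\partial D_j$ so that the pieces fit consistently. Both rest on standard but delicate sphere-map manipulations, and are where I would concentrate the most care in a written proof.
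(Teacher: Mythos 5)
Your strategy---prove the general forward inclusion $D(M)\subseteq D(\sigma_r(M))$ and then invoke $D(M)=\bZ$---is genuinely different from the paper's, which instead uses the explicit decomposition $\sigma_r(S^n\times S^m)=(S^{n+r}\times S^m)\#(S^n\times S^{m+r})$ (and $\sigma_r(M_1\# M_2)=\sigma_r(M_1)\#\sigma_r(M_2)$) and builds degree-$d$ self-maps \emph{directly} on the spin, rather than spinning a self-map of $M$. Unfortunately, your route has a gap precisely at the step you flagged as ``standard'': the preimage-consolidation homotopy that would arrange $f^{-1}(D^n)=D^n$.

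The relevant general tool (Rong--Wang, which the paper itself cites in Lemma~\ref{degreesrspin}) requires $\pi_1$-surjectivity of $f$, and this fails in exactly the case the theorem must cover. For $M=S^1\times S^1$ (or more generally $T^k$), \emph{every} degree-$d$ self-map with $|d|>1$ is given up to homotopy by an integer matrix of determinant $d$ and is therefore \emph{never} $\pi_1$-surjective; concretely, for $(z,w)\mapsto(z^d,w)$ the preimage of a small disk is $|d|$ disjoint disks, and the connecting arcs between them map to essential loops, so there is no homotopy that merges them. Thus the inclusion $D(T^2)\subseteq D(\sigma_r(T^2))$ is not established by your argument, even though the conclusion is true. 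The paper sidesteps this entirely: since $r>0$, each summand $S^{n+r}\times S^m$ and $S^n\times S^{m+r}$ of $\sigma_r(S^n\times S^m)$ has a factor $S^{n+r}$ or $S^{m+r}$ of dimension $\geq 2$, so one can place the degree-$d$ map on the \emph{simply connected} factor of each summand, obtaining $\pi_1$-surjective maps on the summands of the spin and making the Rong--Wang gluing legitimate. In short, the paper works on $\sigma_r(M)$, where spinning has ``bought'' the extra sphere dimensions needed for $\pi_1$-surjectivity, whereas you work on $M$, where that dimension is unavailable when all sphere factors are circles. (Your argument does go through whenever $M$ has a simply connected sphere factor, since one can then take $f$ to be a product of a standard degree-$d$ suspension map with identities, and the preimage of a suitable ball is automatically a ball---but this does not cover the tori, and stating the forward inclusion as a \emph{general} fact is overclaiming.)

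One further remark: your verification of $D(M)=\bZ$ in the three classes is fine, and the degree computation for $\sigma_r(f)$ given the preimage condition is correct. The only substantive issue is that the preimage condition cannot always be achieved.
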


In the spirit of our main Theorem \ref{t:main} about controlling both the fundamental group and the homology of the strongly chiral manifolds, S. Weinberger pointed out to me a -- less explicit than Theorem \ref{t:main} -- way of obtaining strongly chiral homology spheres with fixed fundamental group. Namely, a result of Kervaire~\cite{K} says that for any finitely presented superperfect group $G$, i.e., satisfying $H_i(BG; \bZ) = 0$ for $i = 1, 2$, there is for any $n > 4$ a closed integral homology $n$-sphere $M$ with $\pi_1(M)= G$. In fact, if  $BG$ is a finite complex, then the semi-$h$-cobordism classes of such homology spheres are in a 1-1 correspondence with $\pi_1(BG^{+})$. With sufficient control of the automorphism group of $G$, one then can conclude that the homology spheres are moreover strongly chiral. This circle of ideas occurs in a paper by Nabutovsky and Weinberger~\cite{NW}, as well as in Weinberger's book~\cite{W}.

\subsection*{Outline of the paper}
In Section \ref{s:preliminaries} we will discuss the preliminaries that we need around the linking form and the spinning process. In Section \ref{s:proof} we will prove Theorem \ref{t:main}. Finally, in Section \ref{s:further} we will discuss some other constructions, and investigate the relationship between the sets of degrees of self-maps of a closed manifold and its $r$-spins, proving in particular Theorem \ref{t:classes}.

\subsection*{Acknowledgments}
 I would like to thank Joan Porti for his comments on the isometry groups of hyperbolic rational homology $3$-spheres, Shmuel Weinberger for his comments on an earlier draft of this paper, as well as an anonymous referee whose remarks helped improve the exposition. 
Also, I would like to thank 
Max Planck Institute for Mathematics in Bonn, where part of this work was carried out.

\section{Linking form and Spinning}\label{s:preliminaries} 
In this section we will give the background on the two building factors of our construction. 

\subsection{The linking form}
The reader can find a detailed exposition of the material of this paragraph in M\"ullner's thesis~\cite{Muethesis}.

\medskip

Given a closed oriented $(2k+1)$-manifold, $k\geq 1$, the {\em linking form} on $M$ is a non-degenerate $(-1)^{k+1}$-symmetric bilinear form
\begin{equation}\label{eq:linking}
L\colon \mathrm{Tor}H^{k+1}(M)\times\mathrm{Tor}H^{k+1}(M)\to \bQ/\bZ,
\end{equation}
which is roughly constructed as follows: Using the Universal Coefficient Theorem projection
\[
H^k(M;\bQ/\bZ)\to\mathrm{Tor} H^{k+1}(M)
\]
 and the (Kronecker) isomorphism (since $\bQ/\bZ$ is a divisible $\bZ$-module) 
\[
H^{k}(M;\bQ/\bZ)\to\mathrm{Hom}(H_k(M),\bQ/\bZ)
\]
one can construct a unique isomorphism
\begin{equation}\label{eq:linkingpart1}
\phi\colon \mathrm{Tor} H^{k+1}(M)\to \mathrm{Hom}( \mathrm{Tor}H_k(M),\bQ/\bZ).
\end{equation}
Also, by Poincar\'e Duality and the $5$-lemma, there is an isomorphism
\[
\delta\colon \mathrm{Tor} H^{k+1}(M)\to \mathrm{Tor}H_k(M),
\]
thus a dual isomorphism
\begin{equation}\label{eq:linkingpart2}
\delta^*\colon\mathrm{Hom}(\mathrm{Tor}H_k(M),\bQ/\bZ)\to\mathrm{Hom}(\mathrm{Tor} H^{k+1}(M),\bQ/\bZ).
\end{equation}
The composition of \eqref{eq:linkingpart1} and \eqref{eq:linkingpart2}, gives us the first  insertion of \eqref{eq:linking}, i.e.
\[
x\to L(x, \cdot)
\]
Checking the $(-1)^{k+1}$-symmetry is a routine and we get in particular that 
\[
x\to L(\cdot,x)
\]
is an isomorphism. Finally, one can check that for any map $f\colon M\to N$ and any cohomology classes $x,y\in\mathrm{Tor}H^{k+1}(M)$ we have
\begin{equation}\label{degree-linking}
L(H^{k+1}(f)(x),H^{k+1}(f)(y))=\deg(f)L(x,y).
\end{equation}

The $(-1)^{k+1}$-symmetry of the linking form gives us two immediate structure theorems, depending on whether $k$ is odd or even, i.e., whether $M$ has dimension congruent to $3$ modulo $4$ or congruent to $1$ modulo $4$ respectively. 
In the first case, this can be used to exclude self-maps of degree $-1$:

\begin{proposition}\cite[Lemma 8]{Muethesis}\label{p:kodd}
Let $M$ be a closed oriented manifold of dimension $2k+1$. 
 If $k=2m+1$ and $\mathrm{Tor} H^{k+1}(M)=\bZ_q$ so that $-1\not\equiv a^2$ $(\mathrm{mod} \ q)$ for all $a\in\bZ$, then $M$ is strongly chiral. 
\end{proposition}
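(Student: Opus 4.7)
The plan is to use the non-degeneracy, the symmetry, and the functoriality of the linking form to derive a congruence of the form $a^2 \equiv -1 \pmod q$, contradicting the hypothesis.

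First I would observe that when $k = 2m+1$, the exponent $(-1)^{k+1} = 1$, so the linking form $L$ on $\mathrm{Tor}\,H^{k+1}(M) = \bZ_q$ is actually symmetric. Suppose, for contradiction, that $f \colon M \to M$ has $\deg(f) = -1$. Then the induced map $H^{k+1}(f)$ restricts to an endomorphism of $\mathrm{Tor}\,H^{k+1}(M) = \bZ_q$, which is multiplication by some integer $a$. Let $x$ be a generator. Applying \eqref{degree-linking} and the bilinearity of $L$ gives
\[
a^2 L(x,x) = L(ax, ax) = L(H^{k+1}(f)(x), H^{k+1}(f)(x)) = -L(x,x),
\]
so $(a^2+1)\, L(x,x) = 0$ in $\bQ/\bZ$.

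Next I would extract arithmetic content from this identity by pinning down the order of $L(x,x)$. Since $L$ is non-degenerate and $\bZ_q$ is cyclic, the map $y \mapsto L(y, \cdot)$ is an isomorphism $\bZ_q \xrightarrow{\sim} \mathrm{Hom}(\bZ_q, \bQ/\bZ) \cong \bZ_q$. As $x$ generates the source, $L(x,\cdot)$ must generate the target, and evaluating at $x$ forces $L(x,x) = b/q + \bZ$ with $\gcd(b,q) = 1$. Then $(a^2+1)b/q \in \bZ$ combined with $\gcd(b,q) = 1$ yields $a^2 \equiv -1 \pmod q$, contradicting the hypothesis on $q$. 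Hence no self-map of degree $-1$ can exist, so $M$ is strongly chiral.

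The proof is short and the only delicate point is justifying that $L(x,x)$ has exact order $q$ in $\bQ/\bZ$, i.e., is represented by a fraction $b/q$ with $b$ coprime to $q$. This is where non-degeneracy and the cyclicity of $\mathrm{Tor}\,H^{k+1}(M)$ are both essential; without them the identity $(a^2+1)L(x,x) = 0$ would not sharpen to the desired congruence modulo $q$.
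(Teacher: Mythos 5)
Your proof is correct and follows essentially the same route as the paper's: apply the functoriality formula $L(H^{k+1}(f)(x),H^{k+1}(f)(y))=\deg(f)L(x,y)$ to a hypothetical degree $-1$ self-map to get $(a^2+1)L(x,x)=0$ in $\bQ/\bZ$, then use non-degeneracy to extract $a^2\equiv -1 \pmod q$. The only difference is that you spell out the final implication (that non-degeneracy plus cyclicity of $\mathrm{Tor}\,H^{k+1}(M)$ forces $L(x,x)$ to have exact order $q$), which the paper leaves implicit with the phrase ``because the linking form is non-degenerate.''
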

\begin{proof}
Suppose there is a map $f\colon M\to M$ with $\deg(f)=-1$. Let $x\in\mathrm{Tor} H^{k+1}(M)=\bZ_q$ be a generator and $H^{k+1}(f)(x)=ax$, $a\in\bZ_q$. By \eqref{degree-linking}, we have
\[
- L(x,x)=L(H^{k+1}(f)(x),H^{k+1}(f)(x))=a^2 L(x,x)
\]
This implies that $-1\equiv a^2$ $(\mathrm{mod} \ q)$ because the linking form in non-degenerate, which contradicts our assumption.
\end{proof}

In the second case, when $M$ has dimension congruent to $1$ modulo $4$, then the linking form is antisymmetric and thus cannot be used anymore as in Proposition \ref{p:kodd}:

\begin{proposition}\cite[Corollary 93]{Muethesis}\label{p:keven}
Let $q>2$ and $M$ be a closed oriented manifold of dimension $2k+1$. If $k=2m$, then $\mathrm{Tor} H^{k+1}(M)\neq\bZ_q$.\end{proposition}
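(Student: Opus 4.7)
The plan is to exploit non-degeneracy of the linking form against its antisymmetry. Since $k=2m$, the sign is $(-1)^{k+1}=-1$, so the linking form
\[
L\colon \mathrm{Tor}H^{k+1}(M)\times\mathrm{Tor}H^{k+1}(M)\to \bQ/\bZ
\]
is antisymmetric, i.e.\ $L(y,x)=-L(x,y)$ for all $x,y$. Assume for contradiction that $\mathrm{Tor}H^{k+1}(M)\cong \bZ_q$ with $q>2$, and pick a generator $x$.

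First I would specialise antisymmetry to the diagonal: setting $y=x$ gives $L(x,x)=-L(x,x)$, hence $2L(x,x)=0$ in $\bQ/\bZ$. Next I would use that every element of $\bZ_q$ has the form $bx$ with $b\in\bZ$, so
\[
L(x,bx)=bL(x,x),
\]
which shows that the image of the adjoint map $L(x,\cdot)\colon \bZ_q\to \bQ/\bZ$ is the cyclic subgroup generated by $L(x,x)$, and this subgroup has order dividing $2$.

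The final step is to invoke non-degeneracy: the map $y\mapsto L(y,\cdot)$ is an isomorphism $\bZ_q\xrightarrow{\cong}\mathrm{Hom}(\bZ_q,\bQ/\bZ)\cong \bZ_q$, so in particular $L(x,\cdot)$ is injective. Combined with the previous step, its image has order exactly $q$ and order at most $2$, forcing $q\le 2$ and contradicting $q>2$.

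The only real content is the diagonal vanishing $2L(x,x)=0$ combined with the fact that a cyclic group is generated by a single element, so there is no significant obstacle; the argument is essentially an observation about antisymmetric non-degenerate forms on cyclic groups, and the application to $M$ is automatic once Section \ref{s:preliminaries} sets up the linking form.
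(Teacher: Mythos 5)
The paper does not prove Proposition~\ref{p:keven} itself; it cites M\"ullner's thesis \cite[Corollary 93]{Muethesis} as a black box, so there is no in-paper proof to compare against. Your argument is a correct, self-contained proof of the expected kind: antisymmetry kills the diagonal, $2L(x,x)=0$, and non-degeneracy on a cyclic group of order $q>2$ cannot tolerate that. Two small remarks. First, the step ``so in particular $L(x,\cdot)$ is injective'' elides a short justification: the adjoint $y\mapsto L(y,\cdot)$ being an isomorphism onto $\mathrm{Hom}(\bZ_q,\bQ/\bZ)\cong\bZ_q$ and $x$ being a generator of $\bZ_q$ together make $L(x,\cdot)$ a \emph{generator} of that cyclic group, and a homomorphism $\bZ_q\to\bQ/\bZ$ generates $\mathrm{Hom}(\bZ_q,\bQ/\bZ)$ exactly when the image of a generator has order $q$, i.e.\ when it is injective. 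Second, the injectivity discussion can be bypassed entirely: from $2L(x,x)=0$ and bilinearity one gets $L(2x,bx)=2bL(x,x)=0$ for all $b$, hence $L(2x,\cdot)=0$; non-degeneracy then forces $2x=0$ in $\bZ_q$, so $q\mid 2$, contradicting $q>2$. Either way the proof is sound.
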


An example that illustrates Proposition \ref{p:kodd} is given by lens spaces $L_p=S^{4m+3}/\bZ_p$, where $p$ is a prime such that $p\equiv 3$ $(\mathrm{mod} \ 4)$, as one can see using for instance Fermat's little theorem. Since finite groups are elementary hyperbolic, this already gives us examples of strongly chiral manifolds with elementary hyperbolic fundamental groups. Our goal is to construct examples with freely indecomposable, torsion-free, non-elementary hyperbolic fundamental groups which are closer to the concept of Theorem \ref{t:W}, that is, using fundamental groups of real hyperbolic manifolds. 

\subsection{Spinning}\label{s:spin}

The material of this paragraph can be found in Suciu's paper on iterated spinning~\cite{Su}.

Given a closed manifold $M$ of dimension $n$ and an integer $r>0$, the {\em $r$-spin} of $M$ is a closed $(n+r)$-manifold defined by
\[
\sigma_r(M) =\partial((M\setminus\mathring{D^n})\times D^{r+1}) 
=(M\setminus\mathring{D^n})\times S^{r}\cup_{S^{n-1}\times S^r}S^{n-1}\times D^{r+1}
\]

The process of $r$-spinning was introduced by Epstein~\cite{Ep} and Cappell~\cite{Ca}, generalising work by Artin~\cite{Ar}.

Two facts that are of great interest to us, concerning the homotopy and the homology of an $r$-spin, are the following:

\begin{lemma}\label{l:homotopy}
Let $M$ be a closed manifold of dimension at least three. For each $r$, there is an isomorphism 
$$\pi_1(M)\cong\pi_1(\sigma_r(M)).$$ 
\end{lemma}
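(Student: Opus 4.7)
The plan is to apply the Seifert--van Kampen theorem directly to the decomposition
\[
\sigma_r(M) = \big((M \setminus \mathring{D^n}) \times S^r\big) \cup_{S^{n-1} \times S^r} \big(S^{n-1} \times D^{r+1}\big)
\]
given in the definition. I would let $U$ be a small open thickening of $(M \setminus \mathring{D^n}) \times S^r$ and $V$ a small open thickening of $S^{n-1} \times D^{r+1}$, so that $U \cup V = \sigma_r(M)$ and $U \cap V$ deformation retracts onto $S^{n-1} \times S^r$.

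Next I would compute the three fundamental groups feeding into van Kampen. Since $n \geq 3$, the boundary sphere $S^{n-1}$ is simply connected, so applying van Kampen to $M = (M \setminus \mathring{D^n}) \cup D^n$ yields $\pi_1(M \setminus \mathring{D^n}) \cong \pi_1(M)$. Hence
\[
\pi_1(U) \cong \pi_1(M) \times \pi_1(S^r), \quad \pi_1(V) \cong \pi_1(S^{n-1}) = 1, \quad \pi_1(U \cap V) \cong \pi_1(S^r).
\]

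If $r \geq 2$ then $\pi_1(S^r)$ is trivial, all three groups on the right simplify, and van Kampen gives $\pi_1(\sigma_r(M)) \cong \pi_1(M) * 1 \cong \pi_1(M)$ without further work. The case $r = 1$ is the only real subtlety: then $\pi_1(U) \cong \pi_1(M) \times \bZ$ and $\pi_1(U \cap V) \cong \bZ$, so $U \cap V$ is not simply connected. The key observation is that the inclusion $U \cap V \hookrightarrow U$ sends a generator of $\bZ$ isomorphically onto the $\bZ$-factor of $\pi_1(U)$, while the inclusion $U \cap V \hookrightarrow V$ kills it, since the $S^1$-coordinate bounds a disk in $D^2$. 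Amalgamating over $\bZ$ therefore kills exactly the extra $\bZ$-factor in $\pi_1(U)$, again producing $\pi_1(\sigma_1(M)) \cong \pi_1(M)$. The main (and essentially only) obstacle is this $r=1$ case; once one checks that the $S^1$-generator in the overlap maps as described into the two pieces, the result drops out of van Kampen.
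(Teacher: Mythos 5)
Your proof is correct and takes essentially the same route the paper alludes to: the paper simply states that the lemma is a consequence of the Seifert--van Kampen theorem applied to the defining decomposition of $\sigma_r(M)$, and you have supplied the details, including the correct identification $\pi_1(M\setminus\mathring{D^n})\cong\pi_1(M)$ for $n\geq3$ and the check that in the $r=1$ case the amalgamation kills exactly the $\bZ$-factor coming from $S^1$.
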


\begin{lemma}\label{l:homology}
Let $M$ be a closed manifold. For each $r$, the $i$th-homology of the  $r$-spin $\sigma_r(M)$ is given by 
$$H_i(\sigma_r(M))\cong H_i(M\setminus\mathring{D^n})\oplus \tilde{H}_{i-r}(M).$$
\end{lemma}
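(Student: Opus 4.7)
The plan is to compute $H_i(\sigma_r(M))$ via Mayer--Vietoris applied to the standard decomposition
\[
\sigma_r(M) = A\cup B, \quad A = M_0\times S^r, \quad B = S^{n-1}\times D^{r+1}, \quad A\cap B = S^{n-1}\times S^r,
\]
where $M_0 := M\setminus\mathring{D^n}$. Since $H_*(S^r)$ is torsion-free and $B$ deformation retracts onto $S^{n-1}$, the K\"unneth formula gives
\[
H_i(A)\cong H_i(M_0)\oplus H_{i-r}(M_0), \quad H_i(B)\cong H_i(S^{n-1}),
\]
\[
H_i(A\cap B)\cong H_i(S^{n-1})\oplus H_{i-r}(S^{n-1}).
\]
Tracking the two inclusions through these identifications, the Mayer--Vietoris map $\phi_i\colon H_i(A\cap B)\to H_i(A)\oplus H_i(B)$ is (up to signs)
\[
(x_0,x_r)\longmapsto \bigl(\iota_*(x_0),\iota_*(x_r),x_0\bigr),
\]
where $\iota\colon S^{n-1}\hookrightarrow M_0$ is the boundary inclusion; the $x_r$-contribution to the $B$-part vanishes because the retraction $B\simeq S^{n-1}$ collapses the $S^r$-factor.

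A short diagram chase (using the generating image elements $(\iota_*(x_0),0,x_0)$ and $(0,\iota_*(x_r),0)$) then yields
\[
\mathrm{coker}\,\phi_i \;\cong\; H_i(M_0)\oplus \bigl(H_{i-r}(M_0)/\iota_*(H_{i-r}(S^{n-1}))\bigr),
\]
\[
\ker\phi_{i-1} \;\cong\; \ker\bigl(\iota_*\colon H_{i-r-1}(S^{n-1})\to H_{i-r-1}(M_0)\bigr).
\]
To recognise $\tilde H_{i-r}(M)$ among these pieces, I would apply Mayer--Vietoris a second time, this time to $M = M_0\cup_{S^{n-1}} D^n$. Since $D^n$ is contractible, this produces a short exact sequence
\[
0\to \tilde H_j(M_0)/\iota_*(\tilde H_j(S^{n-1})) \to \tilde H_j(M) \to \ker\bigl(\iota_*\colon \tilde H_{j-1}(S^{n-1})\to \tilde H_{j-1}(M_0)\bigr)\to 0,
\]
which splits because the rightmost group is a subgroup of $\tilde H_{j-1}(S^{n-1})$, which is either $0$ or $\bZ$ and thus free. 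Setting $j=i-r$, the two resulting summands match exactly the second summand of $\mathrm{coker}\,\phi_i$ and $\ker\phi_{i-1}$, so the Mayer--Vietoris short exact sequence
\[
0\to \mathrm{coker}\,\phi_i \to H_i(\sigma_r(M)) \to \ker\phi_{i-1}\to 0
\]
also splits for the same reason, producing $H_i(\sigma_r(M))\cong H_i(M_0)\oplus \tilde H_{i-r}(M)$.

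The main technical obstacle I anticipate is the careful bookkeeping in the first step: correctly describing $\phi_i$ in terms of the K\"unneth summands, extracting its kernel and cokernel, and then recognising the resulting pieces as precisely those produced by the second Mayer--Vietoris sequence for $\tilde H_{i-r}(M)$. Once this identification is in place, the splittings and the conclusion are formal; a handful of low-degree edge cases (where $\tilde H_0$ or $S^{n-1}$ contributes nontrivially, or where reduced and unreduced homology diverge) can be verified by direct inspection and cause no essential difficulty.
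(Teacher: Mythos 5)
Your proposal is correct and follows exactly the route the paper indicates: the paper only remarks that the lemma is ``a consequence of the Mayer--Vietoris sequence'' (citing Suciu's paper on iterated spinning), and your argument supplies the missing details of that computation. You apply Mayer--Vietoris to the defining decomposition $\sigma_r(M)=(M_0\times S^r)\cup_{S^{n-1}\times S^r}(S^{n-1}\times D^{r+1})$, correctly identify $\operatorname{coker}\phi_i$ and $\ker\phi_{i-1}$ via K\"unneth and the collapse of the $S^r$-factor in $B$, then compare these pieces to the split short exact sequence arising from $M=M_0\cup_{S^{n-1}}D^n$ to recognise $\tilde H_{i-r}(M)$, with both splittings justified because the relevant kernels sit inside the free group $H_*(S^{n-1})$. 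The bookkeeping checks out (including the reduced-versus-unreduced edge cases at degree $0$), so this is a complete proof in the spirit of the paper.
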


Lemma \ref{l:homotopy} and Lemma \ref{l:homology} are consequences of the Van Kampen theorem and the Mayer-Vietoris sequence respectively. Note that Lemma \ref{l:homotopy} fails in dimension two because 
$$\sigma_1(S^1\times S^1)=(S^2\times S^1)\#(S^2\times S^1),$$ see~\cite[Lemma 1.3]{Su}.

\section{Proof of Theorem \ref{t:main}}\label{s:proof}

In dimension three, Theorem \ref{t:main} amounts to the existence of a hyperbolic rational homology $3$-sphere with first integral homology group isomorphic to $\bZ_{2p}$: 
Let $N_p$ be a $3$-manifold obtained by hyperbolic $(2p,q)$-Dehn filling (for sufficiently large $q$) on the complement of a hyperbolic knot in $S^3$. This is indeed a hyperbolic $3$-manifold with integral cohomology 
\begin{equation}\label{eq.cohomologyN_p}
H^i(N_p;\bZ) =\left\{\begin{array}{ll}
        \bZ, & \text{for } i=0,3\\
        \bZ_{2p}, & \text{for } i=2\\
        0, & \text{otherwise.}
        \end{array}\right.
\end{equation}
Hence, if we choose $p$ to be a prime so that 
$p\equiv 3 \ (\mathrm{mod} \ 4),$ then we conclude by Proposition \ref{p:kodd} that $N_p$ is strongly chiral. 
 
\begin{remark}\label{r:aspherical}
Alternatively, the full isometry group $\mathrm{Isom}(N_p)$ can be taken to be of odd order; see~\cite{PP}\footnote{I thank J. Porti for drawing my attention to~\cite{PP} and for the enlightening communication.}. In that case, the argument of Theorem \ref{t:W} applies: Suppose there is a map $f\colon N_p\to N_p$ with $\deg(f)\neq0$. Since $N_p$ is hyperbolic, it has non-zero simplicial (or hyperbolic) volume, hence $\deg(f)=\pm1$. In particular, the induced endomorphism $\pi_1(f)$ is surjective and thus an isomorphism, since fundamental groups of hyperbolic manifolds (or just closed $3$-manifold groups) are Hopfian. Mostow's rigidity then tells us that $f$ is homotopic to an isometry. Since $\mathrm{Isom}(N_p)$ is of odd order, we conclude that $\deg(f)=1$. Note that this argument implies moreover that $D(N_p)=\{0,1\}$.
\end{remark}

The above give us Theorem \ref{t:main} in dimension three, i.e., the case $m=0$:

\begin{theorem}\label{t:dim3}
For each prime $p$ with $p\equiv 3 \ (\mathrm{mod} \ 4)$, there are infinitely many strongly chiral hyperbolic rational homology $3$-spheres $M_{0}=N_p$ with integral homology 
\begin{equation}\label{eq.homologyN_p}
H_i(N_p;\bZ) =\left\{\begin{array}{ll}
        \bZ, & \text{for } i=0,3\\
        \bZ_{2p}, & \text{for } i=1\\
        0, & \text{otherwise.}
        \end{array}\right.
\end{equation}
\end{theorem}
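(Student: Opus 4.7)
The plan is to construct $N_p$ explicitly as a hyperbolic Dehn filling on a hyperbolic knot complement in $S^3$, verify that its integral homology has the form \eqref{eq.homologyN_p}, and then apply Proposition \ref{p:kodd} with $q=2p$ to conclude strong chirality. The infinitely-many aspect will come from varying the second Dehn filling coefficient.

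First I would fix any hyperbolic knot $K \subset S^3$, such as the figure-eight knot. By Thurston's hyperbolic Dehn surgery theorem, all but finitely many coprime slopes $(a,b)$ on $S^3\setminus K$ yield closed hyperbolic $3$-manifolds, with hyperbolic volumes approaching (but strictly less than) $\mathrm{vol}(S^3\setminus K)$. Thus for every prime $p$ and every sufficiently large integer $q$ coprime to $2p$, the $(2p,q)$-Dehn filling $N_{p,q}$ is hyperbolic. Since the hyperbolic volumes attain infinitely many distinct values as $q$ varies, one obtains infinitely many pairwise non-homeomorphic closed hyperbolic $3$-manifolds for each fixed $p$.

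Next, the homology computation is routine: $H_1(S^3\setminus K;\bZ)\cong\bZ$ is generated by the meridian, and the $(2p,q)$-filling adjoins a relation that kills $2p$ times this generator, so $H_1(N_{p,q};\bZ)\cong\bZ_{2p}$. Poincar\'e duality together with the universal coefficient theorem then forces $H_2(N_{p,q};\bZ)$ to be free of rank $b_1(N_{p,q})=0$, hence zero, while $H^2(N_{p,q};\bZ)\cong\mathrm{Tor}\,H_1(N_{p,q};\bZ)\cong\bZ_{2p}$; this is exactly \eqref{eq.cohomologyN_p}--\eqref{eq.homologyN_p}, and in particular $N_{p,q}$ is a rational homology $3$-sphere.

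Finally, to invoke Proposition \ref{p:kodd} in dimension three (so $k=1$ and $m=0$) with $\mathrm{Tor}\,H^2(N_{p,q};\bZ)=\bZ_{2p}$, one needs to verify that $-1$ is not a square modulo $2p$. By the Chinese Remainder Theorem, $\bZ_{2p}\cong\bZ_2\times\bZ_p$, and since $p\equiv 3\ (\mathrm{mod}\ 4)$ is prime, Euler's criterion gives that $-1$ is a quadratic non-residue modulo $p$, hence also modulo $2p$. The conclusion of Proposition \ref{p:kodd} then yields strong chirality. The main potential obstacle is ensuring that the slope $(2p,q)$ can be realised as a hyperbolic filling for the prescribed $p$ and for infinitely many $q$, which is handled by Thurston's theorem; the rest is bookkeeping plus the elementary number-theoretic check. (As an alternative, one could appeal to the argument sketched in Remark \ref{r:aspherical} using the isometry group of $N_p$, but the linking-form route is more direct and generalises uniformly to higher dimensions.)
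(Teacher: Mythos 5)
Your proof is correct and follows essentially the same route as the paper: take a hyperbolic $(2p,q)$-Dehn filling on a hyperbolic knot complement in $S^3$, observe that the resulting closed hyperbolic $3$-manifold has $H_1\cong\bZ_{2p}$ and hence $\mathrm{Tor}\,H^2\cong\bZ_{2p}$, and invoke Proposition \ref{p:kodd} together with the elementary fact that $-1$ is a quadratic non-residue mod $2p$ when $p\equiv 3\ (\mathrm{mod}\ 4)$. You merely supply more of the standard details (Thurston's Dehn surgery theorem, the volume argument for infiniteness, the UCT/Poincar\'e-duality bookkeeping, the CRT check) than the paper chooses to spell out.
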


Now, we will show that there are infinitely many strongly chiral closed oriented manifolds $M_m$ of dimension $4m+3$, $m\geq1$, which have real 
hyperbolic fundamental group and non-zero intermediate integral homology groups isomorphic to $\bZ_{2p}$ in degrees $1,2m+1$ and $4m+1$.

\medskip

We begin by constructing the first building factor. Let $N_p$ be any strongly chiral hyperbolic rational homology $3$-sphere as given by Theorem \ref{t:dim3}. For each $m\geq1$, the $4m$-spins of each $N_p$ are given by
\[
\begin{array}{rcl}
N_{4m+3,p}=\sigma_{4m}(N_p)&=&\partial((N_p\setminus\mathring{D^3})\times D^{4m+1})\\
&=&(N_p\setminus\mathring{D^3})\times S^{4m}\cup_{S^{2}\times S^{4m}}S^{2}\times D^{4m+1}
\end{array}
\]
and they are closed oriented $(4m+3)$-manifolds. By Lemma \ref{l:homotopy}, the manifolds $N_{4m+3,p}$ have fundamental groups
\begin{equation}\label{pi1}
\pi_1(N_{4m+3,p})\cong\pi_1(N_p),
\end{equation}
which are hyperbolic $3$-manifold groups.  
By Lemma \ref{l:homology}, the homology of each $N_{4m+3,p}$ is given by
\begin{equation}\label{homology1}
H_i(N_{4m+3,p})\cong H_i(N_p\setminus\mathring{D^3})\oplus \tilde{H}_{i-4m}(N_p).
\end{equation}
By  \eqref{eq.homologyN_p}, excision and the long-exact sequence of the pair $(N_p,N_p\setminus\mathring{D^3})$, we have that \eqref{homology1} becomes
\begin{equation}\label{homology2}
H_i(N_{4m+3,p};\bZ) =\left\{\begin{array}{ll}
        \bZ, & \text{for } i=0,4m+3\\
        \bZ_{2p}, & \text{for } i=1,4m+1\\
        0, & \text{otherwise}.
        \end{array}\right.
\end{equation}
In particular, $N_{4m+3,p}$ is a rational homology $(4m+3)$-sphere for each $m\geq1$.

\medskip

Next, we will use the linking form to obstruct degree $-1$ maps as in Proposition \ref{p:kodd}, keeping at the same time the rest of the homology of the manifold trivial, which will give us the second building factor: Let the sphere $S^{2m+2}$. Then the tangent bundle $TS^{2m+2}$ has Euler class $$e(TS^{2m+2})=2\omega_{S^{2m+2}},$$ where $\omega_{S^{2m+2}}$ is a cohomological orientation (fundamental) class in $H^{2m+2}(S^{2m+2};\bZ)$; see for example~\cite[Prop. 3.14]{Ha}.

We pull back the bundle map $TS^{2m+2}\to S^{2m+2}$ by a self-map $f\colon S^{2m+2}\to S^{2m+2}$ to obtain a $D^{2m+2}$-bundle $f^*(TS^{2m+2})\to S^{2m+2}$ with Euler class 
$$e(f^*(TS^{2m+2}))=2\deg(f)\omega_{S^{2m+2}}.$$
 The cohomology of the associated $S^{2m+1}$-bundle $E_m$ over $S^{2m+2}$ can be computed via the Gysin sequence and is given by
\begin{equation}\label{cohomEm}
H^i(E_m;\bZ)=\left\{\begin{array}{ll}
        \bZ, & \text{for } i=0,4m+3\\
        \bZ_{2|\deg(f)|}, & \text{for } i=2m+2\\
        0, & \text{otherwise}.
        \end{array}\right.
\end{equation}
Hence, if we take $\deg(f)$ to be a prime number $p$ with 
$p\equiv 3 \ (\mathrm{mod} \ 4),$ then Proposition \ref{p:kodd} implies that $E_m$ is strongly chiral:

\begin{theorem}\label{t:scsc}
For each $m\geq0$, there is a $2m$-connected strongly chiral rational homology sphere $E_m$ of dimension $4m+3$. \end{theorem}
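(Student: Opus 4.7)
The manifold $E_m$ has already been constructed above the theorem statement as the total space of the $S^{2m+1}$-bundle associated to the pulled-back disk bundle $f^*(TS^{2m+2})\to S^{2m+2}$, where $f\colon S^{2m+2}\to S^{2m+2}$ has prime degree $p\equiv 3\ (\mathrm{mod}\ 4)$. By construction $E_m$ is a closed orientable manifold of dimension $(2m+2)+(2m+1)=4m+3$, so the plan is simply to verify the three asserted properties in turn.

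For $2m$-connectedness, I would run the long exact sequence of homotopy groups for the fibration $S^{2m+1}\hookrightarrow E_m\to S^{2m+2}$. Since $\pi_i(S^{2m+1})=\pi_i(S^{2m+2})=0$ for $1\leq i\leq 2m$, the sequence immediately forces $\pi_i(E_m)=0$ in the same range. The rational homology sphere property is then immediate from \eqref{cohomEm}: all intermediate integral cohomology of $E_m$ is concentrated in the single degree $2m+2$ and is finite, hence rationally trivial.

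The strong chirality, which is really the only piece of the argument with any content, follows by applying Proposition \ref{p:kodd} with $k=2m+1$ and $q=2p$. The dimension matches ($\dim E_m = 2k+1$), the torsion hypothesis $\mathrm{Tor}\,H^{k+1}(E_m)=\bZ_{2p}$ is exactly what \eqref{cohomEm} records, and $k=2m+1$ is of the required odd form. The only arithmetic verification required is that $-1$ is not a square modulo $2p$. The plan here is a Chinese Remainder splitting $\bZ_{2p}\cong \bZ_2\times\bZ_p$: modulo $2$ the statement is vacuous, while modulo the odd prime $p$, Euler's criterion (or the standard quadratic reciprocity supplement) says that $-1$ is a quadratic residue mod $p$ if and only if $p\equiv 1\ (\mathrm{mod}\ 4)$; since we assume $p\equiv 3\ (\mathrm{mod}\ 4)$, no solution exists.

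There is no real obstacle: the construction has been carefully set up in the preceding paragraphs to feed directly into Proposition \ref{p:kodd}, and the Gysin-sequence computation \eqref{cohomEm} has already done the work of identifying the linking-form domain as the cyclic group $\bZ_{2p}$. The minor care point is simply keeping track of the parity condition on $k$ together with the arithmetic of $2p$; once both are confirmed, the theorem is an immediate consequence of Proposition \ref{p:kodd}.
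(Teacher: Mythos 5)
Your proposal is correct and follows essentially the same approach as the paper: the construction of $E_m$ as the pulled-back sphere bundle, the Gysin-sequence cohomology computation \eqref{cohomEm}, the homotopy long exact sequence for $2m$-connectedness, and the application of Proposition \ref{p:kodd} with $q=2p$. The only addition is that you make explicit, via the Chinese Remainder splitting and the first supplement to quadratic reciprocity, the arithmetic fact that $-1$ is not a square modulo $2p$ when $p\equiv 3\ (\mathrm{mod}\ 4)$, a detail the paper leaves implicit.
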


We remark that the $2m$-connectedness of $E_m$ follows from the homotopy long-exact sequence of the fibration $S^{2m+1}\to E_m\to S^{2m+2}$, namely
\[
\cdots\pi_{j+1}(S^{2m+2})\to\pi_j(S^{2m+1})\to\pi_j(E_m)\to\pi_j(S^{2m+2})\to\cdots.
\]
In particular, $E_m$ is simply connected for $m\geq1$. 

\begin{remark}
The manifolds $E_m$ were also used in~\cite[pp. 41-42]{Muethesis} and~\cite{Mue} for constructing strongly chiral products. Clearly, a construction using products can never preserve the property of being rational homology sphere. We will come back to the case of product manifolds in Section \ref{s:further}.
\end{remark}

With the two building factors in hands, we can now complete the proof of Theorem \ref{t:main}: For each $m\geq1$, let the connected sum
\[
M_m=E_m\# N_{4m+3,p}.
\]
Using \eqref{homology2} and \eqref{cohomEm}, we obtain that the homology of $M_m$ is given as in \eqref{eq.homology}; equivalently, its cohomology is
\begin{equation}\label{cohomMm}
H^i(M_m;\bZ)=\left\{\begin{array}{ll}
        \bZ, & \text{for } i=0,4m+3\\
        \bZ_{2p}, & \text{for } i=2,2m+2,4m+2\\
        0, & \text{otherwise}.
        \end{array}\right.
\end{equation}
Since $p\equiv 3 \ (\mathrm{mod} \ 4),$ we conclude again using Proposition \ref{p:kodd} that $M_m$ is strongly chiral. Moreover, since $E_m$ is simply connected, we conclude that
$$\pi_1(M_m)\cong\pi_1(N_{4m+3,p})\cong\pi_1(N_p),$$
which is a hyperbolic $3$-manifold group.

\medskip

The proof of Theorem \ref{t:main} is now complete.

\medskip

One could slightly refine the homological part of Theorem \ref{t:main} into the following form, where the price to be paid is that we need to use a different hyperbolic $3$-manifold for the spinning process than that of Theorem \ref{t:dim3}, and, in particular, the fundamental groups of $M_m$ are fixed only in dimensions $\geq7$:

 \begin{theorem}\label{t:main2}
For each $m\geq0$ and any prime $p$ so that $p\equiv 3$ $(\mathrm{mod} \ 4)$, there are infinitely many strongly chiral rational homology $(4m+3)$-spheres $M_m$ with real hyperbolic fundamental groups and homology
\begin{equation}\label{eq.homology2}
H_i(M_m;\bZ) =\left\{\begin{array}{ll}
        \bZ, & \text{for } i=0,4m+3\\
        \bZ_{2p}, & \text{for } i=2m+1\\
        0, & \text{otherwise}.
        \end{array}\right.
\end{equation}  
\end{theorem}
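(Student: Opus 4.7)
The plan is to adapt the proof of Theorem \ref{t:main} by substituting the hyperbolic rational homology $3$-sphere $N_p$ used as the seed of the spinning construction with a hyperbolic \emph{integral} homology $3$-sphere $Z$. For the base case $m = 0$, the manifold $M_0 = N_p$ from Theorem \ref{t:dim3} already satisfies everything required, since then $2m+1 = 1$ is the unique intermediate degree, and no modification is needed. For $m \geq 1$, I would choose a closed hyperbolic integer homology $3$-sphere $Z$; such manifolds are produced abundantly by hyperbolic Dehn filling on complements of hyperbolic knots in $S^3$ along a slope producing trivial first homology.

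I would then form the $4m$-spin $\sigma_{4m}(Z)$ and verify that it is an integer homology $(4m+3)$-sphere: a short excision and long exact sequence argument (exactly as used in the proof of Theorem \ref{t:main}) shows that $Z \setminus \mathring{D^3}$ has trivial reduced integer homology, hence Lemma \ref{l:homology} forces every intermediate homology group of $\sigma_{4m}(Z)$ to vanish. Lemma \ref{l:homotopy} gives $\pi_1(\sigma_{4m}(Z)) \cong \pi_1(Z)$, a real hyperbolic $3$-manifold group, and this fundamental group is constant for all $m \geq 1$; this is exactly why the ``dimensions $\geq 7$'' restriction appears. I would then set
\[
M_m = E_m \# \sigma_{4m}(Z),
\]
with $E_m$ the $2m$-connected strongly chiral rational homology $(4m+3)$-sphere from Theorem \ref{t:scsc}.

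The verification of the three required properties is then straightforward. By the connected sum formula together with \eqref{cohomEm} and Poincar\'e duality, the only non-zero intermediate integral homology of $M_m$ is $H_{2m+1}(M_m; \bZ) = \bZ_{2p}$, giving \eqref{eq.homology2}; Van Kampen and the simple connectedness of $E_m$ give $\pi_1(M_m) \cong \pi_1(Z)$; and strong chirality follows from Proposition \ref{p:kodd}, since $\mathrm{Tor}\, H^{2m+2}(M_m; \bZ) \cong \bZ_{2p}$ and $p \equiv 3 \ (\mathrm{mod} \ 4)$ ensures $-1$ is not a square modulo $2p$ (reducing mod $p$). Infinitely many non-homeomorphic $M_m$ arise from the infinitely many choices of $Z$. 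I do not anticipate any serious obstacle beyond Theorem \ref{t:main}: the essential observation is that replacing a rational with an integral homology $3$-sphere in the seed of the spinning eliminates the unwanted torsion in degrees $1$ and $4m+1$, at the cost that the fundamental group now depends on $m$ in the single remaining low-dimensional case $m=0$.
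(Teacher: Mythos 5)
Your proposal is correct and follows essentially the same route as the paper, which simply declares that the proof of Theorem \ref{t:main} goes through verbatim after replacing the spinning seed $N_p$ by a closed hyperbolic integral homology $3$-sphere $Z$ for $m\geq1$ (retaining $M_0=N_p$ for $m=0$). Your unfolding of the homology, fundamental group, and chirality checks matches what the paper intends, and you correctly identify why the fundamental group is only fixed in dimensions $\geq 7$.
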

\begin{proof}
For $m=0$, we use the hyperbolic rational homology $3$-spheres  $M_0=N_p$ from Theorem \ref{t:dim3}. For $m\geq1$, we do not do $4m$-spins of $M_0$, but of a closed hyperbolic  integral homology $3$-sphere. The proof then follows verbatim that of Theorem \ref{t:main}.  The fundamental group, however, remains unchanged only in dimensions $\geq7$.
\end{proof}

Finally, we remark that one could also replace $\bZ_{2p}$ in Theorem \ref{t:main} (or in Theorem \ref{t:main2}) with other finite cyclic groups, as long as the conditions of Proposition \ref{p:kodd} are satisfied.

\section{Products, Spinning and Questions}\label{s:further}

We will now discuss some related constructions, as well as some questions on the ingredients of the main results of this paper.

\subsection{Products}

Exploiting Theorem \ref{t:scsc}, one can proceed using methods of~\cite{Mue} and~\cite{Ne18} to construct many strongly chiral manifolds with real hyperbolic fundamental groups, which are however not rational homology spheres:

\begin{theorem}\label{t:products}
For each $k\geq2$, there are infinitely many strongly chiral manifolds of dimension $4k+2$ with real hyperbolic fundamental group.
\end{theorem}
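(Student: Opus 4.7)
My plan is to construct $M$ as a product $E_m \times Y$, where $E_m$ is the simply connected strongly chiral rational homology $(4m+3)$-sphere of Theorem \ref{t:scsc} (with $m \geq 1$) and $Y$ is a closed strongly chiral real hyperbolic manifold of some odd dimension $d \geq 3$, furnished by Theorem \ref{t:W}. For the dimensions to match I choose $d = 4(k-m) - 1$ and impose the inequality $d < 4m + 3$; for instance $m = k-1$, $d = 3$ works for every $k \geq 2$. Since $E_m$ is simply connected for $m \geq 1$, the van Kampen theorem gives $\pi_1(M) \cong \pi_1(Y)$, a real hyperbolic manifold group as required.

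To show strong chirality, consider a self-map $f \colon M \to M$ and pass to rational cohomology. Since $E_m$ is a rational homology sphere, $H^*(E_m; \bQ) \cong H^*(S^{4m+3}; \bQ)$; by Künneth together with the inequality $d < 4m+3$, the groups $H^{4m+3}(M; \bQ)$ and $H^{d}(M; \bQ)$ are each one-dimensional, generated respectively by $\alpha = [E_m]^* \times 1$ and $\beta = 1 \times [Y]^*$. Therefore $f^*\alpha = a \alpha$ and $f^*\beta = b \beta$ for some scalars $a, b \in \bQ$. Pulling these identities back along the slice inclusions $\iota_{y_0} \colon E_m \hookrightarrow M$, $x \mapsto (x, y_0)$, and $\iota_{x_0} \colon Y \hookrightarrow M$, $y \mapsto (x_0, y)$, identifies $a$ and $b$ with the degrees of the self-maps $g = \pi_{E_m} \circ f \circ \iota_{y_0}$ of $E_m$ and $h = \pi_Y \circ f \circ \iota_{x_0}$ of $Y$; in particular $a, b \in \bZ$. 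Cupping and pairing against the top class then yields $\deg(f) = ab$. By Theorems \ref{t:scsc} and \ref{t:W}, neither $E_m$ nor $Y$ admits a self-map of degree $-1$, so $a, b \neq -1$. The only integer factorizations of $-1$ are $(1)(-1)$ and $(-1)(1)$, both excluded; hence $\deg(f) \neq -1$ and $M$ is strongly chiral.

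Infinitude follows by varying $Y$: Theorem \ref{t:W} yields infinitely many strongly chiral real hyperbolic $d$-manifolds with pairwise non-isomorphic fundamental groups, so the corresponding products with $E_m$ produce infinitely many strongly chiral $(4k+2)$-manifolds distinguished by $\pi_1$. The main technical step that I expect to work carefully is the identification of $a$ and $b$ with genuine degrees of self-maps of the factors (so that the strong chirality of each factor can be invoked); this is what forces the dimensional separation $d < 4m+3$, which guarantees that $\alpha$ and $\beta$ really are the unique nonzero rational cohomology classes in their respective degrees.
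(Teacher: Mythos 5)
Your construction is the same as the paper's: products $E_m \times Y$ with $E_m$ the strongly chiral $2m$-connected rational homology $(4m+3)$-sphere of Theorem \ref{t:scsc} and $Y$ a strongly chiral hyperbolic $3$-manifold from Theorem \ref{t:W}, with $m = k-1$. Where you differ is in how strong chirality of the product is verified. The paper appeals to the general criterion of \cite[Theorem 1.4]{Ne18} as a black box, whereas you give a direct, self-contained argument: since $E_m$ is a rational homology sphere with $\dim Y < \dim E_m$, the rational cohomology groups in degrees $\dim Y$ and $\dim E_m$ are each one-dimensional, which lets you split any self-map of the product on rational cohomology, identify the two eigenvalues with degrees of self-maps of the factors via slice inclusions, and conclude $\deg(f) = ab$ with $a,b \neq -1$. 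The argument is correct (the dimension inequality $d < 4m+3$ is exactly what makes the eigenvalue decomposition work, and the factorization of $-1$ over $\bZ$ forces one of $a,b$ to be $-1$). This buys transparency in this particular case, at the cost of generality — the cited result of \cite{Ne18} applies more broadly and is what also underlies the paper's second, alternative variant (fixing $Y$ and varying $E_m$, distinguished by Euler class), which you did not address but which is not needed for the theorem as stated. One small cosmetic point: you should note explicitly that $Y$ is orientable (closed hyperbolic manifolds from Theorem \ref{t:W} are oriented, as chirality presupposes an orientation), so that the degree-counting makes sense.
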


\begin{proof}
We know by Theorem \ref{t:W} that there are infinitely many strongly chiral hyperbolic $3$-manifolds $M$. For each $m\geq1$, let $E_m$ be a $2m$-connected strongly chiral rational homology sphere of dimension $4m+3$, given by Theorem \ref{t:scsc}. Then the conditions of~\cite[Theorem 1.4]{Ne18} apply and we deduce that the products $M\times E_m$ are strongly chiral.

Alternatively, we can fix a strongly chiral hyperbolic $3$-manifold $M$ (given by Theorem \ref{t:W}) and take the products with infinitely many $2m$-connected strongly chiral rational homology spheres $E_m$ of dimension $4m+3$, $m\geq1$, given by Theorem \ref{t:scsc}, which can be distinguished by their Euler class since $D(S^{2m+2})=\bZ$. Again, the conditions of~\cite[Theorem 1.4]{Ne18} apply and thus $M\times E_m$ are strongly chiral.
\end{proof}

Clearly, the products $M\times E_m$ are not rational homology spheres. We note that the second proof of Theorem \ref{t:products} is closer to the spirit of Theorem \ref{t:main}. Indeed, as explained in the proof of Theorem \ref{t:main}, the strongly chiral hyperbolic $3$-manifolds factors $M$ can be taken to be rational homology spheres (Theorem \ref{t:dim3}), giving us thus complete control of the homology. However, even with the latter reduction, we would still need to kill the $\bZ$ factor in the third homology group coming from $M$. This is clearly impossible by taking products. The spinning process gave us a delicate way to avoid this issue.

\subsection{Self-mapping degrees of $r$-spins}

The argument in Remark \ref{r:aspherical} cannot be used anymore for higher dimensional $r$-spins, since the fundamental group will remain unchanged, more precisely, $3$-dimensional hyperbolic, and, in particular, the $r$-spins will not be aspherical. Therefore, an interesting question would be the following:

\begin{problem}\label{p3}
What is the relationship between $D(M)$ and $D(\sigma_r(M))$?
\end{problem}

The first natural case to consider is that of a sphere, for which we have
\[
D(S^n)=D(\sigma_r(S^n)),
\]
since $\sigma_r(S^n)=S^{n+r}$ for any $r>0$. Next, one can consider products of spheres. In that case, we have $D(S^n\times S^m)=\bZ$ and the $r$-spin of $S^n\times S^m$ is given by
\[
\sigma_r(S^n\times S^m)=(S^{n+r}\times S^m)\#(S^n\times S^{m+r});
\]
see~\cite[Lemma 1.3]{Su}. We have the following:

\begin{lemma}\label{degreesrspin}
$D(\sigma_r(S^n\times S^m))=\bZ$ for all $r>0$.
\end{lemma}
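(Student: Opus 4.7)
The plan is to use the defining spin decomposition of $\sigma_r(M)$ for $M=S^n\times S^m$ directly, rather than the Suciu connect-sum identification recalled just above. Writing $M'=M\setminus\mathring{D^{n+m}}$ and $\partial M'\cong S^{n+m-1}$, we have by definition
$$\sigma_r(M)=M'\times S^r\ \cup_{\partial M'\times S^r}\ \partial M'\times D^{r+1}.$$
For every integer $d$ I will produce a self-map of $\sigma_r(M)$ of degree $d$.

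Given $d\in\bZ$, I pick $\phi\colon S^r\to S^r$ of degree $d$ and extend it radially to $\hat\phi\colon D^{r+1}\to D^{r+1}$ by $\hat\phi(t\omega):=t\,\phi(\omega)$ for $\omega\in S^r$, $t\in[0,1]$, with $\hat\phi(0)=0$. The extension $\hat\phi$ is continuous on all of $D^{r+1}$ and restricts to $\phi$ on $\partial D^{r+1}=S^r$. Equivalently, $\mathrm{id}_{M'}\times\hat\phi$ is a continuous self-map of the bounding manifold $M'\times D^{r+1}$. I then define $F_d\colon\sigma_r(M)\to\sigma_r(M)$ as the boundary restriction: $\mathrm{id}_{M'}\times\phi$ on $M'\times S^r$ and $\mathrm{id}_{\partial M'}\times\hat\phi$ on $\partial M'\times D^{r+1}$. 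The two pieces restrict to $\mathrm{id}_{\partial M'}\times\phi$ on the common boundary $\partial M'\times S^r$, so $F_d$ is well-defined and continuous.

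To read off the degree, I choose a regular value $q\in S^r$ of $\phi$ and a point $x_0\in\mathrm{int}(M')$, and take $p=(x_0,q)$. Since $x_0\notin\partial M'$, this $p$ lies only in the first piece and not in $\partial M'\times D^{r+1}$, so $F_d^{-1}(p)=\{x_0\}\times\phi^{-1}(q)$, a finite set whose signed count equals $\deg(\phi)=d$. Hence $\deg(F_d)=\pm d$ (the sign being a fixed orientation convention independent of $d$), and so $\bZ\subseteq D(\sigma_r(M))$. The reverse inclusion is immediate, giving the lemma.

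There is essentially no serious obstacle here: the only checks are continuity of $\hat\phi$ at the origin and of $F_d$ across the joining sphere, both routine. I note that the construction nowhere uses the special form $M=S^n\times S^m$, and in fact it establishes the more general statement $\bZ\subseteq D(\sigma_r(N))$ for every closed oriented manifold $N$ and every $r>0$. A connect-sum-based approach via a pinch map $\sigma_r(S^n\times S^m)\to(S^{n+r}\times S^m)\vee(S^n\times S^{m+r})$ would be natural in light of the preceding discussion, but the naive ``unpinch'' from the wedge back to the connect sum generally fails to be continuous (the joining sphere need not be null-homotopic in the connect sum), so I would avoid that route.
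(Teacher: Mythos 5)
Your proof is correct, and it takes a genuinely different (and more elementary) route than the paper's. The paper invokes Suciu's identification $\sigma_r(S^n\times S^m)\cong(S^{n+r}\times S^m)\#(S^n\times S^{m+r})$, builds a degree-$d$ self-map on each summand as a product $g_i\times\mathrm{id}$ with $g_i$ a degree-$d$ self-map of the simply connected sphere factor, and then glues the two maps across the connected-sum sphere using the Rong--Wang ``preimage of a ball'' trick. You instead work directly with the defining decomposition $\sigma_r(M)=M'\times S^r\cup_{\partial M'\times S^r}\partial M'\times D^{r+1}$ and act by a degree-$d$ self-map $\phi$ of the spinning fiber $S^r$ (coned off radially on the $D^{r+1}$ piece), leaving the $M'$ coordinate alone. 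The degree count via a regular value in $\mathrm{int}(M')\times S^r$ is sound; alternatively one can see it homologically, since $H_{n+r}(\sigma_r(M))\cong H_{n+r}(M'\times S^r,\partial M'\times S^r)\cong H_n(M',\partial M')\otimes H_r(S^r)$ and $\mathrm{id}_{M'}\times\phi$ acts on the right-hand side by multiplication by $d$, giving $\deg(F_d)=d$ exactly (so your $\pm$ hedge is not needed). Your approach also sidesteps the pinch/unpinch issue you flag, which is indeed the reason the paper needs Rong--Wang rather than the naive wedge map.

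Worth emphasizing: your construction never uses that $M=S^n\times S^m$. It shows $\bZ\subseteq D(\sigma_r(N))$, hence $D(\sigma_r(N))=\bZ$, for \emph{every} closed oriented $N$ and every $r>0$. This is strictly stronger than Lemma \ref{degreesrspin}, immediately yields Theorem \ref{t:classes} (since $D(M)=\bZ$ in each of its cases), subsumes Examples \ref{ex1} and \ref{ex2}, and in effect settles Problem \ref{p4}: since $D(\sigma_r(M))$ is always all of $\bZ$, one has $D(M)=D(\sigma_r(M))$ if and only if $D(M)=\bZ$. In particular no $r$-spin is ever strongly chiral, which explains from a different angle why the paper must take a connected sum with $E_m$ rather than relying on the spin $N_{4m+3,p}$ alone.
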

\begin{proof}
Since $\sigma_r(S^n\times S^m)=(S^{n+r}\times S^m)\#(S^n\times S^{m+r})$, we observe that the two factors $S^{n+r}$ and $S^{m+r}$ in $S^{n+r}\times S^m$ and $S^n\times S^{m+r}$ respectively are simply connected. On both of  those simply connected factors, we take a self-map $g_i$ of any degree $d$. On the other factor of each summand, $S^m$ and $S^n$ (which could be $S^1$), we take the identity. The product self-maps on each summand 
$$f_1=g_1\times id\colon S^{n+r}\times S^m\to S^{n+r}\times S^m$$
and
 $$f_2=id\times g_2\colon S^n\times S^{m+r}\to S^n\times S^{m+r}$$
  have therefore degree $d$ and they are moreover $\pi_1$-surjective, since on the simply connected factors of each summand we have the self-maps of degree $d$, and on the other factors the identity which induces an isomorphism on $\pi_1$. Then, by~\cite{RW}, we homotope each $f_i$ so that the inverse image of some $(m+n+r)$-ball $D_i$ in the codomain is an $(m+n+r)$-ball $D_i'$ in the domain. Hence, we obtain maps
  $$\overline{f_1}\colon (S^{n+r}\times S^m)\setminus D_i'\to (S^{n+r}\times S^m)\setminus D_i$$
and
 $$\overline{f_2}\colon (S^n\times S^{m+r})\setminus D_i'\to (S^n\times S^{m+r})\setminus D_i$$
  which induce self-maps of degree $d$ on the boundary $(m+n+r-1)$-spheres. Then we glue $\overline{f_1}$ and $\overline{f_2}$ along these boundaries to obtain a map
  \[
  (S^{n+r}\times S^m)\#(S^n\times S^{m+r})\to (S^{n+r}\times S^m)\#(S^n\times S^{m+r})
  \]
  of degree $d$.
\end{proof}

In particular, Lemma \ref{degreesrspin} tells us that
\[
D(S^n\times S^m)=D(\sigma_r(S^n\times S^m)),
\]
for all $r>0$.

However, in general, the two sets $D(M)$ and $D(\sigma_r(M))$  are not related as shown by the following examples. 

\begin{example}\label{ex1}
Starting already from hyperbolic surfaces 
$\Sigma_g$, one has 
$$D(\Sigma_g)=\{-1,0,1\}\neq\bZ,$$
while as we have mentioned above (see~\cite{Su})
\[
\sigma_r(\Sigma_g)=\#_{2g}(S^{r+1}\times S^1),
\]
which means that
\[
D(\sigma_r(\Sigma_g))=\bZ, 
\]
by Lemma \ref{degreesrspin}. 
\end{example}

\begin{example}\label{ex2}
For the complex projective space $\mathbb{CP}^n$ we have (see~\cite[Example 1.6]{NSTWW})
\[
D(\mathbb{CP}^n)=\{k^n \ | \ k\in\bZ\}\neq\bZ, \ n\geq2,
\]
while  (see~\cite{Su})
\[
\sigma_r(\mathbb{CP}^n)=\mathbb{CP}^{n-1}\times S^{r+2},
\]
and so
\[
D(\sigma_r(\mathbb{CP}^n))=\bZ.
\]
\end{example}

Both examples suggest something natural, namely that $r$-spins have larger self-mapping degree sets than the original manifolds, since the $r$-spinning process involves spheres. We could thus refine Problem \ref{p3} as follows:

\begin{problem}\label{p4}
Classify those manifolds which satisfy $D(M)=D(\sigma_r(M))$.
\end{problem}

As a first observation regarding Problem \ref{p4} we have the following:

\begin{theorem}[Theorem \ref{t:classes}]
Suppose $M$ is either a sphere, a product of spheres or a connected sum of products of spheres. Then  $D(M)=D(\sigma_r(M))=\bZ$ for any $r>0$.
\end{theorem}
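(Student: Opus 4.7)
The plan is to establish both $D(M)=\bZ$ and $D(\sigma_r(M))=\bZ$ in each of the three cases. The inclusion $D(\cdot)\subseteq\bZ$ is automatic, so the content is to realize every integer as a self-mapping degree on both sides. The sphere case is immediate from $\sigma_r(S^n)=S^{n+r}$.

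For a product $M=S^{n_1}\times\cdots\times S^{n_k}$, I first realize every degree $d$ on $M$ by combining a degree-$d$ self-map of a single factor with the identity on the others. To handle $\sigma_r(M)$, I would iterate Suciu's Lemma 1.3 in \cite{Su} (placing the spin ball inside one $S^{n_i}$ at a time) to obtain a connected-sum decomposition whose summands are products of spheres in which some $S^{n_i}$ has been replaced by $S^{n_i+r}$. Since $n_i+r\geq r+1\geq 2$, each summand carries a simply connected sphere factor. The argument of Lemma \ref{degreesrspin} now applies uniformly: take a $\pi_1$-surjective degree-$d$ product self-map on each summand (degree $d$ on the simply connected factor, identity on the others), apply \cite{RW} to homotope each to send a ball to a ball, and glue along the connected-sum spheres.

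For a connected sum of products of spheres $M=M_1\#\cdots\#M_N$, the equality $D(M)=\bZ$ follows by the same gluing recipe, realizing degree $d$ as a $\pi_1$-surjective self-map on each $M_j$ and gluing along the connected-sum spheres. For $\sigma_r(M)$, the key step is to verify, by placing the spin ball inside a chosen summand and invoking the product case, that $\sigma_r(M)$ is again (up to diffeomorphism) a connected sum of products of spheres in which every summand carries a simply connected sphere factor of dimension at least $r+1$; applying the previous paragraph's construction then produces self-maps of every degree.

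The main obstacle is the bookkeeping required when combining the two Suciu-style decompositions: one must track how the spin ball interacts with both the product and the connected-sum structure, and check that no summand obtained from iterating these decompositions consists purely of circles, so that the Lemma \ref{degreesrspin} gluing construction can be applied uniformly across every summand.
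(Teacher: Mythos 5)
Your plan matches the paper's proof in all essentials: both arguments rest on Suciu's formula $\sigma_r(S^n\times S^m)=(S^{n+r}\times S^m)\#(S^n\times S^{m+r})$, the fact that spinning commutes with connected sums, and the Rong--Wang gluing construction encapsulated in Lemma~\ref{degreesrspin}. The one ingredient you gesture at but do not name is Gordon's result (\cite{Go}) that $\sigma_r(M_1\#M_2)\cong\sigma_r(M_1)\#\sigma_r(M_2)$; your substitute (``placing the spin ball inside a chosen summand'') does not by itself deliver the clean connected-sum decomposition of $\sigma_r(M)$ that you need, so you should cite Gordon here as the paper does.

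There is one substantive point where your proposal and the paper diverge, and where your own worry at the end is exactly on target. You treat arbitrary products $S^{n_1}\times\cdots\times S^{n_k}$, whereas the paper's proof only ever handles products of \emph{two} spheres $S^{k_i}\times S^{m_i}$ with $k_i+m_i=n$. This matters for the claim $D(M)=\bZ$: the gluing recipe requires a $\pi_1$-surjective degree-$d$ self-map on each summand $M_j$, which you produce by taking degree $d$ on a simply connected sphere factor. If a summand $M_j$ is a torus $T^{k_j}=(S^1)^{k_j}$, no such factor exists, and in fact a $\pi_1$-surjective self-map of a torus must have degree $\pm1$ (determinant $\pm1$). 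In the paper's two-sphere setting with $n\geq3$ one always has $\max(k_i,m_i)\geq2$, so the issue never arises; this is exactly why the paper restricts to two-sphere products. (Indeed, without some such restriction the statement itself fails: $\#_g(S^1\times S^1)=\Sigma_g$ has $D(\Sigma_g)=\{-1,0,1\}$ for $g\geq2$, a fact the paper records in Example~\ref{ex1}.) For the \emph{spun} manifold $\sigma_r(M)$ the concern evaporates, since spinning manufactures an $S^{n_i+r}$ factor of dimension $\geq2$ in each summand; the problem is only on the unspun side. So the gap you flag is real in the generality you chose, but it disappears if you adopt the paper's convention of working with products of two spheres in dimension at least $3$.
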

\begin{proof}
We have seen above the two cases
\[
D(S^n)=D(\sigma_r(S^n))=\bZ,
\]
and
\[
D(S^n\times S^m)=D(\sigma_r(S^n\times S^m))=\bZ.
\]
For the remaining case, let a closed $n$-manifold $M$ of the form
\[
M=\#_{i=1}^s(S^{k_i}\times S^{m_i}),
\]
where $k_i+m_i=n$ for all $i=1,...,s$. Since $\sigma_r(S^{k_i}\times S^{m_i})=(S^{k_i+r}\times S^{m_i})\#(S^{k_i}\times S^{m_i+r})$, and $\sigma_r(M_1\#M_2)=\sigma_r(M_1)\#\sigma_r(M_2)$ for any two manifolds $M_1,M_2$ (see~\cite{Go}), we obtain
\[
\sigma_r(M)=\#_{i=1}^s\sigma_r(S^{k_i}\times S^{m_i})=\#_{i=1}^s((S^{k_i+r}\times S^{m_i})\#(S^{k_i}\times S^{m_i+r})).
\]
Thus, Lemma \ref{degreesrspin} tells us that
\[
D(\sigma_r(M))=D(M)=\bZ.
\]
\end{proof}

\subsection{Iterated spinning}
Given an $m$-tuple $R=(r_1,r_2,...,r_m)$, $r_i>0$, the (iterated) $R$-spin of a closed $n$-manifold $M$ is given by
 \[
 \sigma_R=\sigma_{r_1}\sigma_{r_2}\cdots\sigma_{r_m}(M).
 \]
 This is a closed $(n+|R|)$-manifold, where $|R|=r_1+r_2+\cdots+r_m$. By Lemma \ref{l:homotopy}, it is clear that $\pi_1(M)\cong\pi_1(\sigma_R(M))$, for $n\geq3$. Moreover, $\sigma_R$ does not depend on the order of the $m$-tuple $R=(r_1,r_2,...,r_m)$, since
 \begin{equation}\label{orderedtuple}
 \sigma_{r_ir_j}(M)\cong\sigma_{r_jr_i}(M)
 \end{equation}
 by~\cite[Lemma 1.2]{Su}.

It is now tempting from the above iterated spinning process, that, if one is willing to sacrifice the homology models given by the manifolds $M_m$ in Theorem \ref{t:main}, then one could try to appeal to this process to construct strongly chiral rational homology spheres with hyperbolic fundamental groups just by applying Proposition \ref{p:kodd}, i.e., without using the $2m$-connected manifolds $E_m$. However, beyond that the iterated spinning process will produce models with more complicated homology, including unbounded homology as $m\to\infty$, it will not generally produce manifolds where Proposition \ref{p:kodd} will be directly applicable, as shown by the following table with all possible $7$-dimensional iterated spinnings produced from the rational homology $3$-spheres $M$ of Theorem \ref{t:dim3}.

\begin{table}[!ht]
\centering
{\small
\begin{tabular}{c|c}
Iterated spinning of $M$ &  $H_3(\cdot;\bZ)$\\
\hline
$\sigma_1\sigma_1\sigma_1\sigma_1(M)$  &    $\bZ_{2p}^6$\\
$\sigma_2\sigma_1\sigma_1(M)$        &     $\bZ_{2p}^2$\\
$\sigma_3\sigma_1(M)$ & $0$\\  
$\sigma_2\sigma_2(M)$ &  $\bZ_{2p}^2$\\
& 
\end{tabular}}
\centering
\caption{}
\end{table}

Note that the iterated spinnings $\sigma_1\sigma_2\sigma_1(M)$, $\sigma_1\sigma_1\sigma_2(M)$ and $\sigma_1\sigma_3(M)$ are not shown on the table, because
\[
\sigma_1\sigma_2\sigma_1(M)\cong\sigma_1\sigma_1\sigma_2(M)\cong\sigma_2\sigma_1\sigma_1(M)
\]
and
\[
\sigma_3\sigma_1(M)\cong\sigma_1\sigma_3(M),
\]
as we mentioned; cf.~\eqref{orderedtuple}.

In addition to the above, the discussion in the preceding paragraph indicates that in general more degrees of self-maps will occur for $\sigma_R(M)$ than for $M$.

\bibliographystyle{alpha}

\end{document}